\documentclass[oneside]{amsart}
\usepackage{amssymb}
\usepackage{amsthm}
\usepackage[dvipsnames]{xcolor}
\usepackage[colorlinks = true,
linkcolor = Blue,
urlcolor  = Blue,
citecolor = Blue,
anchorcolor = Blue]{hyperref}
\usepackage{cleveref}
\usepackage{tikz-cd} 
\usepackage{todonotes}
\usepackage{xstring}
\usepackage[mathscr]{euscript}   
\usepackage[normalem]{ulem}
\newcommand{\cat}[1]{
	\StrLen{#1}[\mystrlen]
	\ifnum\mystrlen=1 \mathscr{#1}
	\else \mathrm{#1}
	\fi}

\DeclareMathOperator{\NS}{NS}

\DeclareMathOperator{\Pic}{Pic}

\DeclareMathOperator{\Gal}{Gal}

\theoremstyle{definition}
\newtheorem{definition}{Definition}[section]

\theoremstyle{plain}
\newtheorem{theorem}[definition]{Theorem}
 
\newtheorem{lemma}[definition]{Lemma}

\newtheorem{mainthm}{Theorem}
\newtheorem{remark}{Remark}

\begin{document}
\title{The  Tate conjecture for ordinary  Calabi--Yau threefolds}
	
	\author[Lukas Brantner]{Lukas Brantner}
	\address{Oxford University,   CNRS (Universit\'{e} Paris--Saclay)} 
	\email{lukas.brantner@maths.ox.ac.uk}

	\begin{abstract}
We use the theory of canonical lifts to prove the integral geometric Tate conjecture for ordinary Calabi--Yau threefolds over finite fields, and also for  divisor classes on certain ordinary Calabi--Yau varieties.
	\end{abstract}
	\
\vspace{-8pt} 
	\maketitle

 	Let $X$ be a smooth projective variety over a finite field
 	$k$ of \mbox{characteristic $p$.} Given a  prime $\ell \neq p$, the \textit{Tate conjecture}, an arithmetic analogue of the Hodge conjecture, 
 	predicts that  any ${\Gal(\overline{k}/k)}$-fixed class in $\mathrm{H}^{2i}(X_{\overline{k}}, \mathbf{Q}_{\ell}(i)) $  is a linear combination of  codimension $i$ \mbox{subvarieties of $X$, } i.e.\ that  the cycle class map 
 	$$\mathrm{cl}_i: \mathrm{CH}^i(X) \otimes_{\mathbf{Z}} \mathbf{Q}_{\ell}  \longrightarrow \mathrm{H}^{2i}_{\acute et} (X_{\overline{k}}, \mathbf{Q}_{\ell} (i))^{{\Gal(\overline{k}/k)}}$$
 	is surjective.   
 It was proven for divisor classes on 
  abelian varieties by Tate \cite{tate1966endomorphisms},  for K3 surfaces, in increasing generality, by Artin--Swinnerton-Dyer \cite{artin1973shafarevich}, Nygaard \cite{nygaard1983tate}, Nygaard--Ogus \cite{nygaard1985tate}, Maulik \cite{maulik2014supersingular}, Charles \cite{charles2013tate}, Madapusi Pera \cite{madapusi2015tate},     Kim--Madapusi Pera \cite{kim20152, err}, and 
 Ito--Ito--Koshikawa \cite{ito2021cm}.   Moonen \cite{Moonen}, Hamacher--Yang--Zhao \cite{hamacher2025tate}, and  Guo--Yang \cite{guo2025tate} established many further cases of the Tate conjecture in the $h^{2,0}=1$-setting. 
 
  For abelian threefolds, Totaro  \cite{totaro2021integral} proved the  \textit{integral geometric Tate conjecture}   (proposed by Schoen \cite{Schoen}), \mbox{showing  that}
 $$\mathrm{cl}_i: \mathrm{CH}^i(X_{\overline{k}} ) \otimes_{\mathbf{Z}} \mathbf{Z}_{\ell}  \longrightarrow \operatorname*{colim}_{l} \  \mathrm{H}^{2i}_{\acute et} (X_{\overline{k}}, \mathbf{Z}_{\ell} (i))^{\Gal(\overline{k}/l)} $$
 is surjective for all $\ell \neq p$; here   $l $ ranges over all   fields $k \subset l \subset \overline{k}$ with $l/k$   finite.   Moreover, Totaro extended   Voisin's celebrated work on the integral Hodge conjecture  \cite{voisin2006integral}, establishing  the  integral geometric Tate conjecture  for Calabi--Yau threefolds  over finitely generated fields \textit{in characteristic zero}.  \vspace{1pt}

In this note, we  prove the  integral geometric Tate conjecture for ordinary Calabi--Yau threefolds in positive characteristic. To this end, we apply the 
canonical lifts we   constructed  {with Taelman \cite{CYDef}} to 
 transfer results from characteristic $0$ to characteristic $p$. We expect that the approach taken here  will   also prove useful for further cases of the Tate conjecture for Calabi--Yau varieties. \vspace{1pt}

 For us, a Calabi--Yau variety is a smooth, projective, and geometrically
 irreducible scheme over $k$ with $\omega_{X/k} \cong \mathcal{O}_X$.
We call $X$ \textit{ordinary} if its Hodge- and Newton polygons agree, and  \textit{$n$-ordinary} if the first $n$ slopes of the two polygons agree (cf.\ \cite[Remark 1.3]{CYDef}). We then prove:
 	\begin{mainthm} \label{threefoldtatemainl}
	Let $X$ be a Calabi--Yau threefold over a finite field $k$ which satisfies one of the following conditions:
	\begin{enumerate}
		\item $X$ is  ordinary;
		\item $X$ is $1$-ordinary and strict, i.e.\   $\mathrm{H}^1(X,\mathcal{O}_X) = \mathrm{H}^2(X,\mathcal{O}_X) = 0$. 
	\end{enumerate}
	Then $X$ satisfies  the integral geometric Tate conjecture.
\end{mainthm}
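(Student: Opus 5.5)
We reduce the integral geometric Tate conjecture to the cases $i=1$ (divisors) and $i=2$ (curves), the cases $i=0,3$ being trivial. The common strategy is to lift. By the results of \cite{CYDef}, an ordinary (resp.\ $1$-ordinary) Calabi--Yau threefold $X$ over $k$ has a canonical lift $\mathfrak{X}$ over $W(k)$. After passing to a finite extension $l/k$, we want this lift to have the property that every line bundle on $X_l$ lifts to $\mathfrak{X}_{W(l)}$, i.e.\ the restriction map $\Pic(\mathfrak{X}) \to \Pic(X)$ is surjective up to finite extension. For the canonical lift this is the key feature: the first Chern class of a line bundle lies in the unit-root (slope $0$, resp.\ slope $1$ after twist) part of crystalline cohomology, and the canonical lift is characterized by its Hodge filtration being compatible with the slope filtration. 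Consequently the obstruction to deforming line bundles, which is the image of $c_1$ in $\mathrm{H}^2(X,\mathcal{O}_X)$ computed via the Hodge filtration, vanishes. In case (2), $\mathrm{H}^2(X,\mathcal{O}_X)=0$, so the obstruction vanishes outright, and only the existence of the lift from $1$-ordinarity is needed.

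\emph{Divisors ($i=1$).} For Calabi--Yau threefolds the $\ell$-adic Tate conjecture for divisors is equivalent to finiteness of the $\ell$-primary part of the Brauer group, and the integral version follows from the Kummer sequence: $\NS(X_{\overline{k}})\otimes\mathbf{Z}_\ell \to \operatorname*{colim}_l \mathrm{H}^2(X_{\overline{k}},\mathbf{Z}_\ell(1))^{\Gal(\overline{k}/l)}$ has cokernel equal to the Galois-invariant part of the Tate module of $\mathrm{Br}$, which vanishes once $\mathrm{Br}[\ell^\infty]$ is finite. To prove the rational statement, we use the canonical lift: choose an embedding $W(\overline{k})[1/p]\hookrightarrow \mathbf{C}$ and consider the generic fibre $\mathfrak{X}_{\mathbf{C}}$. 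For canonical lifts one expects $\Pic(\mathfrak{X}_{\overline{K}})\otimes\mathbf{Q} \cong \Pic(X_{\overline{k}})\otimes\mathbf{Q}$ together with a matching of Tate classes. The Tate classes in $\mathrm{H}^2$ of $X$ correspond, via the crystalline comparison (Fontaine--Messing / $p$-adic Hodge theory for ordinary varieties), to Frobenius-invariant classes of slope $1$. By ordinarity and the canonical-lift property these lie in $\mathrm{Fil}^1$ of the generic fibre, hence are Hodge classes of type $(1,1)$. Lefschetz $(1,1)$ then makes them algebraic on $\mathfrak{X}_{\mathbf{C}}$, and specialization brings them down to $X$. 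Some care is needed to compare $\ell$-adic and crystalline Tate classes; we do this by counting dimensions using the characteristic polynomial of Frobenius, which is independent of the cohomology theory.

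\emph{Curves ($i=2$).} We mimic Totaro's argument. Voisin's theorem, as extended by Totaro, gives the integral Hodge conjecture for curve classes on the complex Calabi--Yau threefold $\mathfrak{X}_{\mathbf{C}}$, and hence the integral geometric Tate conjecture for the generic fibre over the finitely generated field of definition. By hard Lefschetz and Poincaré duality, with divisors already handled, rational Tate classes in $\mathrm{H}^4$ are spanned by products of divisor classes with an ample class, so the rational statement follows from the case $i=1$. For the integral statement, we specialize curves from the generic fibre of $\mathfrak{X}$ to $X$. By smooth proper base change, $\mathrm{H}^4(X_{\overline{k}},\mathbf{Z}_\ell(2))\cong \mathrm{H}^4(\mathfrak{X}_{\overline{K}},\mathbf{Z}_\ell(2))$, and specialization of cycles is compatible with this isomorphism. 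It therefore suffices to show that every Galois-invariant class on $X$ comes from a class on $\mathfrak{X}_{\overline{K}}$ that is invariant under an open subgroup of $\Gal(\overline{K}/K)$. This holds because we have lifted all divisors: after a finite extension, the invariant lattice in $\mathrm{H}^4$ is determined by the divisors via Poincaré duality, so it matches on both sides.

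The main obstacle is the lifting step for $i=1$: proving that the canonical lift lifts all line bundles in case (1), where $\mathrm{H}^2(X,\mathcal{O}_X)$ may be nonzero. This comparison between $p$-adic and $\ell$-adic Tate classes has to be extracted from the explicit construction of the canonical lift in \cite{CYDef}. I would first try to show that Frobenius-invariant crystalline classes of weight $1$ lie in the unit-root subcrystal, and that this subcrystal is contained in $\mathrm{Fil}^1$ by the defining property of the canonical lift.
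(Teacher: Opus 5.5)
\textbf{The gap is in your divisor step.} It breaks down exactly where it is needed.

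If $b_1(X)=0$, the complex generic fibre has $h^{0,2}=h^{0,1}=0$ (Serre duality with $\omega$ trivial). Then every class in $\mathrm{H}^2$ is a Hodge class, and Lefschetz $(1,1)$ plus specialisation already suffice; no slope argument is needed.

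For ordinary $X$ with $b_1(X)>0$ (ordinary abelian threefolds, products of an elliptic curve with a K3, and so on) we have $\mathrm{H}^2(X,\mathcal{O}_X)\neq 0$. There your inference ``these lie in $\mathrm{Fil}^1$ of the generic fibre, hence are Hodge classes of type $(1,1)$'' is unjustified:
\begin{itemize}
\item A crystalline Tate class in $\mathrm{Fil}^1$ is a $p$-adic object. Via the crystalline comparison it is a $\Gal(\overline{K}/K)$-invariant class in the $p$-adic \'etale cohomology of $\mathfrak{X}_{\overline{K}}$.
\item A Hodge class, by contrast, must be a rational Betti class on $\mathfrak{X}_{\mathbf{C}}$. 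An abstract embedding $\overline{K}\hookrightarrow\mathbf{C}$ does not relate the two. Claiming that the former lies in the $\mathbf{Q}_p$-span of the latter is itself a Tate-type statement over a $p$-adic field, which is what is in question.
\item What the $\mathrm{Fil}^1$-compatibility of the canonical lift actually gives is that line bundles \emph{already on} $X$ lift. This is \cite[Theorem 8.28]{CYDef}, so the step you flag as ``the main obstacle'' is available off the shelf. It cannot, however, manufacture line bundles out of Tate classes.
\item Even for ordinary K3 surfaces, Nygaard's canonical-lift proof needs the Kuga--Satake construction and Tate's theorem on top of the lift.
\end{itemize}

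\textbf{The missing idea} is the Patakfalvi--Zdanowicz Beauville--Bogomolov decomposition \cite{patakfalvi2026weak}. It applies because ordinary Calabi--Yau varieties are $F$-split \cite{achinger2018serre}.
\begin{itemize}
\item For $b_1(X)>0$ it yields a generically finite dominant map $Z\to X_{\overline{k}}$, with $Z$ an abelian threefold or a product of an elliptic curve and a K3 surface.
\item \cite[Theorem 5.2]{tate1994conjectures} together with \cite[Lemma 6.2]{totaro2021integral} then reduce the integral statement for divisors on $X$ to the known rational Tate conjecture for divisors on $Z$.
\item Only after this is the canonical lift used, to lift $\mathcal{L}$ and the $\mathcal{L}_i$ with $v=\sum\lambda_i c_1(\mathcal{L}_i)$.
\end{itemize}

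\textbf{Two smaller points.}
\begin{itemize}
\item The lift in case (2) comes from Ward's theorem \cite{Ward}. It is not a canonical lift from \cite{CYDef}, which requires $2$-ordinarity.
\item In your curve step, invariance under an open subgroup of $\Gal(\overline{K}/K)$ is automatic by good reduction. It is also not what Totaro's result needs, since that result is over a finitely generated field. The correct argument runs as follows. Write $N\overline{u}=\overline{v}\cup\overline{H}$ with $\overline{v},\overline{H}$ algebraic on $\mathfrak{X}_{\overline{K}}$. Torsion-freeness of $\mathrm{H}^4(\mathfrak{X}_{\mathbf{C}},\mathbf{Z})$ modulo Hodge classes then shows that $\overline{u}$ is a $\mathbf{Z}_\ell$-combination of integral Hodge classes. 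Finally apply the Voisin--Totaro integral Hodge conjecture for curve classes \cite{voisin2006integral, totaro2021integral} and specialise to $X_{\overline{k}}$.
\end{itemize}
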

The second case uses a recent lifting result of Ward \cite{Ward}, whose proof relies on the crystalline period map constructed in \cite{CYDef}. 

 In general dimensions, we offer the following result:
\begin{mainthm}\label{divisortate}
	Let $X$ be a $2$-ordinary Calabi--Yau variety with   $\mathrm{H}^2(X, \mathcal{O}_X) = 0$.
	Then $X$ satisfies the integral geometric Tate conjecture for divisor classes.
\end{mainthm}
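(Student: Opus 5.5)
The plan is to lift $X$ to characteristic zero in a way that every Galois-invariant line bundle class lifts, apply the Lefschetz $(1,1)$ theorem there, and then specialize back. Since divisor classes satisfy the integral Hodge conjecture over $\mathbf{C}$, the whole difficulty lies in choosing the lift.

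First, after a finite extension of $k$, which is harmless for the geometric statement, I reduce to showing the following: for every $\ell \neq p$, every class in $\mathrm{H}^2_{\acute et}(X_{\overline{k}}, \mathbf{Z}_\ell(1))$ fixed by $\Gal(\overline{k}/l)$ lies in the image of $\Pic(X_{\overline{k}})\otimes \mathbf{Z}_\ell$. By the Kummer sequence, the cokernel of $\Pic(X_{\overline{k}})\otimes\mathbf{Z}_\ell \to \mathrm{H}^2(X_{\overline{k}},\mathbf{Z}_\ell(1))$ is torsion-free, since it embeds in $T_\ell \mathrm{Br}$. So it suffices to prove the rational statement $\NS(X_{\overline{k}})\otimes \mathbf{Q}_\ell \twoheadrightarrow \operatorname*{colim}_l \mathrm{H}^2(\dots,\mathbf{Q}_\ell(1))^{\Gal(\overline{k}/l)}$, i.e.\ the usual Tate conjecture for divisors over every finite extension. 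Integrality is then automatic.

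Second, I use the canonical lift. The hypothesis $\mathrm{H}^2(X,\mathcal{O}_X)=0$ makes deformations of line bundles unobstructed: every line bundle on $X$ extends over any deformation of $X$. Combined with $2$-ordinarity, the results of \cite{CYDef} give a canonical lift $\mathcal{X}/W(k)$ together with a lift of Frobenius structure on the relevant part of cohomology. More importantly, every Frobenius-eigen crystalline class of slope $1$ in $\mathrm{H}^2_{\mathrm{cris}}$ lies in $\mathrm{Fil}^1$ and is compatible with the Hodge filtration of $\mathcal{X}$. Embedding $W(k)\hookrightarrow \mathbf{C}$, I obtain a complex Calabi--Yau variety $\mathcal{X}_\mathbf{C}$. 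The key intermediate claim is that the specialization map $\Pic(\mathcal{X}_{\overline{K}}) \to \Pic(X_{\overline{k}})$ is an isomorphism after $\otimes\mathbf{Q}$. Injectivity is standard. Surjectivity follows because $\mathrm{H}^2(\mathcal{O})=0$ means every line bundle lifts to $\mathcal{X}$ formally, and algebraization follows by Grothendieck existence.

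Third, I compare ranks. Let $V$ be the Tate classes in $\mathrm{H}^2(X_{\overline{k}},\mathbf{Q}_\ell(1))$. I must show $\dim V \le \rho(X_{\overline{k}}) = \rho(\mathcal{X}_{\overline{K}})$. Via the crystalline comparison and Katz--Messing, $\dim V$ equals the dimension of the slope-$1$ Frobenius-invariant part $\mathrm{H}^2_{\mathrm{cris}}(X/W)[1/p]^{F=p}$ after extending $k$. By $2$-ordinarity, the unit-root and slope-$1$ parts of $\mathrm{H}^2_{\mathrm{cris}}$ split canonically, and $\mathrm{H}^2(\mathcal{O})=0$ kills the slope-$0$ part. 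So $\mathrm{H}^2_{\mathrm{cris}}$ is isoclinic of slope $\ge 1$. Since $\mathrm{H}^{2,0}=0$ and Hodge numbers are constant in the family, the Hodge structure on $\mathrm{H}^2(\mathcal{X}_\mathbf{C},\mathbf{Q})$ is entirely of type $(1,1)$. Lefschetz $(1,1)$ then gives that $\mathrm{H}^2(\mathcal{X}_\mathbf{C},\mathbf{Z})$ is spanned rationally by algebraic classes, so $\rho(\mathcal{X}_\mathbf{C}) = b_2$. Specialization then yields $\rho(X_{\overline{k}}) = b_2 \ge \dim V$, which finishes the proof.

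The main obstacle is the rank comparison. The Lefschetz argument over $\mathbf{C}$ shows all of $\mathrm{H}^2$ is algebraic on the generic fiber once $h^{2,0}=0$, but I must make sure these classes are defined over a finite extension of $K$ and specialize to line bundles on $X_{\overline{k}}$, not merely to classes. If $\mathrm{H}^2(\mathcal{O})=0$ alone already gives this, then $2$-ordinarity would be superfluous. I therefore expect the genuine role of the canonical lift to be controlling $\mathrm{H}^2(X,\mathcal{O}_X)=0$ versus $h^{0,2}$ of the lift, together with torsion-freeness and the crystalline comparison. My first attempt would be to verify directly that $\mathrm{Pic}$ of the canonical lift surjects onto $\mathrm{Pic}(X)$ and has rank $b_2$, using \cite{CYDef}.
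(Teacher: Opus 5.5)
Your argument is in substance the paper's proof. It takes the canonical lift of \cite{CYDef}; this is the only place $2$-ordinarity is needed, since $\mathrm{H}^2(X,\mathcal{O}_X)=0$ lets line bundles deform but does not make $X$ itself liftable. It gets $h^{2,0}=0$ on the generic fibre by semicontinuity and Hodge symmetry, uses Lefschetz $(1,1)$ to make all of $\mathrm{H}^2$ algebraic there, specialises along $\NS(\mathcal{X}_{\overline{K}})\to\NS(X_{\overline{k}})$ compatibly with cycle classes, and uses torsion-freeness of the cokernel for integrality.

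The rest should be dropped or corrected:
\begin{itemize}
\item The crystalline/Katz--Messing comparison and the lifting of line bundles from $X$ are never used.
\item Your ``key claim'' is false for $\Pic$ but true for $\NS$: when $\mathrm{H}^1(X,\mathcal{O}_X)\neq 0$, $\Pic^0(X_{\overline{k}})$ is torsion while $\Pic^0(\mathcal{X}_{\overline{K}})\otimes\mathbf{Q}$ need not vanish.
\item The ``obstacle'' you raise is not one. $\NS(\mathcal{X}_{\mathbf{C}})=\NS(\mathcal{X}_{\overline{K}})$, and a line bundle on $\mathcal{X}_{\overline{K}}$ is defined over some finite $K'/K$ and extends over the regular scheme $\mathcal{X}_{\mathcal{O}_{K'}}$, so it always specialises to $X_{\overline{k}}$.
\end{itemize}
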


\begin{remark}
We can replace the ordinarity assumption in Theorem A(1) and the $2$-ordinarity assumption in Theorem B by the condition that $X$ is Bloch--Kato $2$-ordinary with $\mathrm{H}^{\dim(X)}_{\acute et}(X_{\overline{k}}, \mathbf{Z}_p)$ torsion-free. 
\end{remark}
 
 \subsection*{Methods}
 The  results of this   short note are obtained by combining 
several   recent advances  in the field. The central idea is that the canonical lifts of ordinary Calabi--Yau varieties   constructed by the author and Taelman \cite{CYDef}, as well as Ward's lifting result for strict $1$-ordinary Calabi--Yau threefolds \cite{Ward}, allow one to generalise  earlier approaches to the Tate conjecture for K3 surfaces.

Taking inspiration from the works of Voisin \cite{voisin2006integral}  and Totaro \cite{totaro2021integral}   on the Hodge conjecture, we first consider the case of vanishing first Betti number $b_1(X)$. Here, the statement for divisors follows by 
applying the  Lefschetz $(1,1)$ theorem to our lift and then specialising. The statement for curve classes follows by adapting Totaro's proof of the integral Tate conjecture for Calabi--Yau threefolds  in characteristic $0$.
The case $b_1(X)>0$ is  more subtle. 
To handle the case of divisors, we  use 
Patakfalvi--Zdanowicz's characteristic $p$ generalisation of the Beauville--Bogomolov decomposition \cite{patakfalvi2026weak} to reduce  to the (known) Tate conjecture for K3 surfaces and abelian threefolds. The analogous reduction in characteristic $0$   \mbox{appears in \cite{totaro2021integral}.}
 To handle the case of curve classes, we apply the lifting result for line bundles to canonical lifts of Calabi--Yau varieties established in  \cite{CYDef}.

	\subsection*{Acknowledgements}
	We thank Matthew Ward for bringing his  recent preprint \cite{Ward} to our attention, Ananth Shankar for commenting on an earlier draft, 	and   Piotr Achinger, Sam Moore, Noé Sotto, and Lenny Taelman for many exchanges about Calabi--Yau varieties \mbox{in characteristic $p$.}  We were  
supported by a Royal Society University Research Fellowship (URF\textbackslash R1\textbackslash 211075) and the \mbox{University of Oxford.}
 		\subsection*{AI Statement}
 We devised the  overall proof strategy. \mbox{ChatGPT} was used to explore the literature and review earlier drafts. The model directed our attention to the work of Patakfalvi and Zdanowicz, which helped us handle the case of divisor classes when $b_1(X)>0$. We independently checked all original sources used in our proofs and take full responsibility for the content of this article.
   
 \section{Proofs}

The  Hodge conjecture predicts that if $Y$ is a smooth projective complex variety, every class $
u \in \mathrm{H}^{2k}(Y,\mathbf{Q}) \cap \mathrm{H}^{k,k}(Y,\mathbf{C})$ is a rational linear combination of codimension $k$  cycles in $Y$.
While this conjecture is open in general, the   case $k=1$ is a theorem due to Lefschetz \cite{lefschetz1924analysis}, in fact   even   integrally:
\begin{theorem}[Lefschetz $(1,1)$-theorem] 
	Let $Y$ be a smooth complex projective variety (or a compact K\"{a}hler manifold).
	The cycle class map $$ \mathrm{Pic}(Y) \longrightarrow \mathrm{H}^{2}(Y,\mathbf{Z}) \cap \mathrm{H}^{1,1}(Y,\mathbf{C})$$
	is surjective.
\end{theorem}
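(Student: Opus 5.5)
The plan is the classical exponential sheaf sequence argument. On $Y$ (complex projective, or compact K\"ahler) consider the exact sequence of sheaves
$$0 \to \mathbf{Z} \xrightarrow{\,2\pi i\,} \mathcal{O}_Y \xrightarrow{\exp} \mathcal{O}_Y^{\times} \to 1,$$
and pass to the long exact sequence in sheaf cohomology:
$$\mathrm{H}^1(Y,\mathcal{O}_Y^\times) \xrightarrow{c_1} \mathrm{H}^2(Y,\mathbf{Z}) \longrightarrow \mathrm{H}^2(Y,\mathcal{O}_Y).$$
Here $\mathrm{H}^1(Y,\mathcal{O}_Y^\times)$ is the group of holomorphic line bundles. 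In the projective case it equals $\mathrm{Pic}(Y)$ by GAGA. One checks the standard compatibility that the connecting map $c_1$ agrees, up to the usual sign and normalisation, with the topological first Chern class, i.e.\ the cycle class map. By exactness, a class $u \in \mathrm{H}^2(Y,\mathbf{Z})$ lies in the image of $\mathrm{Pic}(Y)$ if and only if its image in $\mathrm{H}^2(Y,\mathcal{O}_Y)$ vanishes.

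The key step is to identify the map $\mathrm{H}^2(Y,\mathbf{Z}) \to \mathrm{H}^2(Y,\mathcal{O}_Y)$. It factors as
$$\mathrm{H}^2(Y,\mathbf{Z}) \to \mathrm{H}^2(Y,\mathbf{C}) \to \mathrm{H}^2(Y,\mathcal{O}_Y),$$
where the second arrow is induced by $\mathbf{C} \hookrightarrow \mathcal{O}_Y$. Under Hodge theory (which holds for compact K\"ahler $Y$), $\mathrm{H}^2(Y,\mathcal{O}_Y) \cong \mathrm{H}^{0,2}(Y)$, and the second arrow is the projection of $\mathrm{H}^2(Y,\mathbf{C}) = \mathrm{H}^{2,0}\oplus \mathrm{H}^{1,1}\oplus \mathrm{H}^{0,2}$ onto the $(0,2)$-component. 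This identification is seen via the Dolbeault resolution of $\mathcal{O}_Y$ together with the de Rham resolution of $\mathbf{C}$: a closed $2$-form maps to its $(0,2)$-part, which is $\bar\partial$-closed.

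Now let $u \in \mathrm{H}^2(Y,\mathbf{Z}) \cap \mathrm{H}^{1,1}(Y,\mathbf{C})$. Its $(0,2)$-component is zero, so $u$ maps to zero in $\mathrm{H}^2(Y,\mathcal{O}_Y)$ and therefore lifts to a line bundle, which proves surjectivity. The same argument also handles torsion classes correctly, since we work with the integral class itself; this gives the integral statement.

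The main obstacle is the careful compatibility check in the key step. One must verify that the sheaf-theoretic map agrees with the Hodge projection, which means comparing \v{C}ech, de Rham, and Dolbeault models. One must also verify that the connecting homomorphism is the Chern class, for instance by computing the curvature of a hermitian metric. Both verifications are routine but are where signs and factors of $2\pi i$ must be tracked.
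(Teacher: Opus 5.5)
Your proof is correct. It is the standard sheaf-theoretic argument via the exponential sequence, and you handle the one step that needs care properly: projection to $(0,q)$-parts is a chain map from the de Rham to the Dolbeault resolution, since the $(0,q)$-part of $d\alpha$ is $\bar\partial\alpha^{0,q-1}$. Hence $\mathrm{H}^2(Y,\mathbf{Z})\to\mathrm{H}^2(Y,\mathcal{O}_Y)$ sends a class represented by a closed $2$-form $\omega$ to $[\omega^{0,2}]$, which vanishes for classes in $\mathrm{H}^{1,1}$. For this direction you only need that such a class has a closed $(1,1)$-representative, not the full Hodge decomposition.

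The paper does not prove the theorem in general. It cites Lefschetz and only sketches the case $\mathrm{H}^2(Y,\mathcal{O}_Y)=0$, which is the case it actually uses, e.g.\ in Lemma \ref{liftgivesdivisors}. In that case the obstruction group vanishes, so exactness of the exponential sequence alone makes $c_1$ surjective onto all of $\mathrm{H}^2(Y,\mathbf{Z})$. Hodge symmetry ($h^{2,0}=h^{0,2}=0$) is invoked only to observe that every integral class is then of type $(1,1)$, so no identification of the map with the $(0,2)$-projection is required. You follow the same route as the paper's sketch, but you supply the extra Hodge-theoretic compatibility that upgrades it to the full statement. Your argument reduces to the paper's when $\mathrm{H}^2(Y,\mathcal{O}_Y)=0$.
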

When $\mathrm{H}^2(Y,\mathcal{O}_Y)=0$, the proof is particularly simple: by Hodge symmetry, any element of $\mathrm{H}^{2}(Y,\mathbf{Z})$
is a Hodge cycle, and  the exponential sequence 
 $$ 0 \rightarrow \mathbf{Z} \rightarrow \mathcal{O}_Y \rightarrow \mathcal{O}_Y^\times \rightarrow 0$$  
 induces a surjection
 $\mathrm{H}^{1}(Y,\mathcal{O}_Y^{\times}) \xrightarrow{c_1}  \mathrm{H}^{2}(Y,\mathbf{Z}), $
i.e.\ an isomorphism between $\mathrm{H}^{2}(Y,\mathbf{Z})$ and the 
   Néron–Severi group $\NS(Y) = \Pic(Y)/\Pic^0(Y).$

 We record a standard consequence of the Lefschetz $(1,1)$-theorem and \mbox{specialisation:}
\begin{lemma}\label{liftgivesdivisors}
Let $X$ be a smooth projective variety 
 over a  field $k \subseteq \overline{\mathbf{F}}_{p}$.
Assume that 
$X$ admits a smooth projective lift $\mathcal{X}$  to the Witt vectors $W(k)$
satisfying $\mathrm{H}^2(\mathcal{X}_K, \mathcal{O}_{\mathcal{X}_K})  = 0 $ for $K = \mathrm{Frac}(W(k))$.	Then $X$ satisfies the integral geometric Tate conjecture on codimension-1 cycles.
\end{lemma}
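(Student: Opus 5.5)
Our plan is to produce, for any class $\alpha \in \mathrm{H}^2_{\acute et}(X_{\overline{k}},\mathbf{Z}_\ell(1))$ fixed by some open subgroup $\Gal(\overline{k}/l)$, a line bundle on $X_{\overline{k}}$ whose first Chern class is $\alpha$. In fact we aim to show more: the map $\NS(X_{\overline{k}})\otimes \mathbf{Z}_\ell \to \mathrm{H}^2_{\acute et}(X_{\overline{k}},\mathbf{Z}_\ell(1))$ is already surjective. First, since the Galois invariants are a colimit over finite extensions $l/k$, and $\mathcal{X}\otimes_{W(k)} W(l)$ is again a lift satisfying the hypothesis (cohomology commutes with flat base change), it is harmless to enlarge $k$ whenever needed.

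\textbf{Step 1 (generic fibre).} Fix an embedding $\overline{K}\hookrightarrow \mathbf{C}$, which exists for cardinality reasons, and set $Y = \mathcal{X}_{\mathbf{C}}$. Since $\mathrm{H}^2(Y,\mathcal{O}_Y)=0$ by flat base change, the simple form of the Lefschetz $(1,1)$-theorem recalled above shows that $c_1\colon \Pic(Y)\to \mathrm{H}^2(Y,\mathbf{Z})$ is surjective. Comparison with étale cohomology then gives surjectivity of $\Pic(Y)\otimes\mathbf{Z}_\ell \to \mathrm{H}^2_{\acute et}(Y,\mathbf{Z}_\ell(1))$. Because $\Pic$ of a smooth projective variety is unchanged under extension from $\overline{K}$ to $\mathbf{C}$ modulo $\Pic^0$, i.e.\ $\NS(\mathcal{X}_{\overline{K}})=\NS(Y)$ (the Picard scheme is locally of finite type, so discrete parts are defined over $\overline{K}$), we also get surjectivity of $\Pic(\mathcal{X}_{\overline{K}})\otimes\mathbf{Z}_\ell \to \mathrm{H}^2_{\acute et}(\mathcal{X}_{\overline{K}},\mathbf{Z}_\ell(1))$.

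\textbf{Step 2 (specialisation).} Any line bundle on $\mathcal{X}_{\overline{K}}$ is defined over $\mathcal{X}_{K'}$ for some finite extension $K'/K$ with ring of integers $\mathcal{O}_{K'}$. Since $\mathcal{X}_{\mathcal{O}_{K'}}$ is regular (smooth over a DVR), the line bundle extends to $\mathcal{X}_{\mathcal{O}_{K'}}$ by taking the closure of a divisor, and then restricts to the special fibre $X_{k'}$. The smooth and proper base change theorem identifies $\mathrm{H}^2_{\acute et}(\mathcal{X}_{\overline{K}},\mathbf{Z}_\ell(1))\cong \mathrm{H}^2_{\acute et}(X_{\overline{k}},\mathbf{Z}_\ell(1))$ compatibly with Chern classes; this compatibility comes from the Kummer sequence on the total space. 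Combining this with Step 1 yields surjectivity onto all of $\mathrm{H}^2_{\acute et}(X_{\overline{k}},\mathbf{Z}_\ell(1))$, hence onto its Galois-invariant colimit.

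\textbf{Main obstacle.} The delicate point is making the specialisation map compatible with cycle classes integrally, including on torsion classes. We intend to handle this via the specialisation isomorphism on $\mathrm{H}^2(-,\mu_{\ell^n})$ for every $n$, together with the Kummer sequence over the strictly henselian base $\mathcal{O}_{K'}^{sh}$, and then pass to the limit over $n$. The other steps are routine.
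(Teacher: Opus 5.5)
Your proposal is correct and follows the paper's strategy: apply Lefschetz $(1,1)$ to a complex model of the generic fibre, then transport the result to $X_{\overline{k}}$ through the specialisation isomorphism on $\mathrm{H}^2_{\acute et}(-,\mathbf{Z}_\ell(1))$, which is compatible with $c_1$, so that all of $\mathrm{H}^2_{\acute et}(X_{\overline{k}},\mathbf{Z}_\ell(1))$ is algebraic. The minor differences are that you get integrality directly from the exponential sequence (surjectivity of $\Pic(Y)\to\mathrm{H}^2(Y,\mathbf{Z})$ when $\mathrm{H}^2(Y,\mathcal{O}_Y)=0$) plus the invariance $\NS(\mathcal{X}_{\overline{K}})\cong\NS(\mathcal{X}_{\mathbf{C}})$ and check specialisation by hand, whereas the paper cites Maulik--Poonen for the specialisation diagram and passes from the rational to the integral statement via Totaro's Lemma~6.2 (the cokernel of $\NS\otimes\mathbf{Z}_\ell\to\mathrm{H}^2$ being torsion-free).
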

\begin{proof} 
 Maulik--Poonen's refinement \cite[Proposition 3.6]{zbMATH06073742} of Grothendieck's specialisation results  
	\cite[Expos\'{e} X, Appendice 7]{ferrand2006theorie} gives a commutative diagram
	$$ \begin{tikzcd}
		\NS(\mathcal{X}_{\overline{K}}) \otimes \mathbf{Z}_{\ell} \arrow[r,hook,"c_1"] \arrow[d,"\mathrm{sp}"] & \mathrm{H}^2_{\acute et} (\mathcal{X}_{\overline{K}}, \mathbf{Z}_\ell(1))\arrow[d,"\cong"] 
		\\ 
		\NS(X_{\overline{k}}) \otimes \mathbf{Z}_{\ell} \arrow[r,hook, "c_1"] & \mathrm{H}^2_{\acute et} (X_{\overline{k}}, \mathbf{Z}_\ell(1))
	\end{tikzcd}$$
	To verify the claim, it suffices to check that the top horizontal map is an isomorphism, and by \cite[Lemma 6.2]{totaro2021integral}, it is enough to do this
	 rationally (note that $\mathcal{X}_{\overline{K}}$ descends to some finitely generated subfield over $\mathbf{Q}$).
This is in turn a consequence of the Lefschetz $(1,1)$ theorem (cf.\ e.g.\  \cite[Proposition 2.3]{gregory2019p}).
\end{proof}

We can now deduce our \Cref{divisortate} on divisor classes.
\begin{proof}[Proof of \Cref{divisortate}]
	By \cite[Theorem C]{CYDef}, $X$ admits a canonical lift to $W(k)$, and the claim follows from Lemma \ref{liftgivesdivisors} 
	as upper semicontinuity (cf.\ \cite[0BDN]{stacks-project}) implies $\mathrm{H}^2(\mathcal{X}_K, \mathcal{O}_{\mathcal{X}_K}) = 0$.	 
\end{proof}

For strict Calabi--Yau threefolds, the existence of lifts has a striking consequence:
 	 	 	\begin{lemma} \label{threefoldtategenerallemma}
	Let $X$ be a strict Calabi--Yau threefold  over a field $k \subseteq \overline{\mathbf{F}}_{p}$
 and assume $X$
 admits a smooth projective lift $\mathcal{X}$ to $W(k)$.
	Then $X$ satisfies the integral geometric Tate conjecture.
\end{lemma}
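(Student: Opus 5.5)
The plan is to treat divisors and curve classes separately, the first through Lemma \ref{liftgivesdivisors}, the second through Totaro's characteristic $0$ argument transported along the lift.

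\emph{Divisors.} Since $X$ is strict, $\mathrm{H}^2(X,\mathcal{O}_X)=0$. Upper semicontinuity along the flat proper family $\mathcal{X}\to \mathrm{Spec}\, W(k)$ then gives $\mathrm{H}^2(\mathcal{X}_K,\mathcal{O}_{\mathcal{X}_K})=0$, and likewise $\mathrm{H}^1(\mathcal{X}_K,\mathcal{O})=0$. So Lemma \ref{liftgivesdivisors} applies directly and proves the integral geometric Tate conjecture in codimension $1$. This step also shows that $\NS(\mathcal{X}_{\overline{K}})\otimes\mathbf{Z}_\ell \to \mathrm{H}^2_{\acute et}(X_{\overline{k}},\mathbf{Z}_\ell(1))$ is an isomorphism, and that specialisation $\NS(\mathcal{X}_{\overline K})\otimes \mathbf{Z}_\ell\to\NS(X_{\overline k})\otimes\mathbf{Z}_\ell$ is an isomorphism. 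We will reuse this below.

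\emph{Curve classes.} I would argue that the generic fibre $\mathcal{X}_{\overline{K}}$ is again a Calabi--Yau threefold: $\omega$ extends trivially over the lift because $\mathrm{H}^1(X,\mathcal{O}_X)=0$ makes $\Pic$ of the formal lift inject into $\Pic(X)$. It descends to a finitely generated field of characteristic $0$, so Totaro's theorem (extending Voisin) gives the integral Tate/Hodge statement for curves on it. More concretely, after embedding into $\mathbf{C}$, every class in $\mathrm{H}^4(\mathcal{X}_{\mathbf{C}},\mathbf{Z})$ is algebraic. This step uses Voisin's theorem for Calabi--Yau threefolds with $h^{2,0}=0$.

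Hence $\mathrm{CH}_1(\mathcal{X}_{\overline K})\otimes\mathbf{Z}_\ell \to \mathrm{H}^4_{\acute et}(\mathcal{X}_{\overline K},\mathbf{Z}_\ell(2))$ is surjective. Next I would specialise curves: a curve on $\mathcal{X}_{\overline K}$ is defined over a finite extension $K'$, so take its closure in $\mathcal{X}_{\mathcal{O}_{K'}}$. Its special fibre is a $1$-cycle on $X_{\overline k}$, and compatibility of cycle classes with the smooth-and-proper specialisation isomorphism $\mathrm{H}^4_{\acute et}(\mathcal{X}_{\overline K},\mathbf{Z}_\ell(2))\cong \mathrm{H}^4_{\acute et}(X_{\overline k},\mathbf{Z}_\ell(2))$ (Fulton's specialisation of cycles) shows that the cycle class map for $X_{\overline k}$ is surjective onto all of $\mathrm{H}^4$. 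In particular it is surjective onto the Galois-invariant part.

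The main obstacle is the curve step, specifically justifying that Voisin/Totaro applies to the complex fibre: that the generic fibre is genuinely a strict Calabi--Yau threefold, and that the integral statement covers all of $\mathrm{H}^4$, including torsion. If the direct appeal is problematic, I would fall back on Totaro's form of the argument: $\mathrm{H}^4$ is Poincaré dual to $\mathrm{H}^2$, which by the divisor step is spanned by line bundles $L$. One then shows that classes of the form $c_1(L)\cdot c_1(M)$ together with curve classes generate, or uses Voisin's degeneration argument that $\mathrm{H}_2(\mathbf{Z})$ is generated by curves on deformations. That argument is applied on the complex side, where everything is available, and then specialised as above. Torsion in $\mathrm{H}^4$ needs separate care, via $\mathrm{H}^4_{\mathrm{tors}}\cong \mathrm{H}^3_{\mathrm{tors}}\cong(\mathrm{H}^2_{\mathrm{tors}})^\vee$; this is already handled in Totaro's characteristic $0$ theorem, which is why I prefer to invoke it directly on the lift.
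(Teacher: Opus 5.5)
Your proof is correct, but for curve classes it takes a more direct route than the paper.

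The paper's route is as follows. It starts from $u\in\mathrm{H}^4_{\acute et}(X_{\overline{k}},\mathbf{Z}_\ell(2))$ and writes $Nu=v\cup H$ by hard Lefschetz. It lifts $u,v,H$ to $\mathcal{X}_{\overline{K}}$ and uses the divisor case on the generic fibre to see that $N\overline{u}$ is algebraic. Torsion-freeness of $\mathrm{H}^4(\mathcal{X}_{\mathbf{C}},\mathbf{Z})$ modulo integral Hodge classes then shows that $\overline{u}$ is a $\mathbf{Z}_\ell$-combination of Hodge classes. Only then does it invoke Totaro/Voisin and specialise.

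You instead apply Voisin to all of $\mathrm{H}^4$ at once. This is legitimate, but you should state the one-line reason you flagged as the ``main obstacle'' and left open. On the strict Calabi--Yau threefold $\mathcal{X}_{\mathbf{C}}$ one has $h^{3,1}=h^1(\Omega^3)=h^1(\mathcal{O})=0=h^{1,3}$ and $h^{4,0}=h^{0,4}=0$, so $\mathrm{H}^4(\mathcal{X}_{\mathbf{C}},\mathbf{C})=\mathrm{H}^{2,2}$. Torsion classes are integral Hodge classes by definition. Hence every class in $\mathrm{H}^4(\mathcal{X}_{\mathbf{C}},\mathbf{Z})$ is an integral Hodge class, and Voisin's theorem (which needs exactly $K$ trivial and $h^{0,2}=0$) covers it, torsion included. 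Your fallback and your separate torsion discussion can therefore be dropped. The passage from cycles on $\mathcal{X}_{\mathbf{C}}$ to cycles on $\mathcal{X}_{\overline{K}}$ is the standard spreading-out step, left implicit in both arguments.

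Your route is shorter. It needs neither hard Lefschetz nor the divisor case for curves, and it uses only Voisin's original theorem rather than Totaro's refinement. The paper's route instead reduces the Hodge property of $\overline{u}$ to algebraicity of divisors. That is what lets it be reused verbatim in the proof of Theorem~\ref{threefoldtatemainl} when $b_1(X)>0$. There the generic fibre of the canonical lift has $h^{3,1}=h^{2,0}=h^{0,1}>0$, so not every class in $\mathrm{H}^4$ is Hodge, and one must start from a Tate class; your shortcut is unavailable in that case.
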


\begin{proof} Let us write $K = \mathrm{Frac}(W(k))$ and fix an 
	  embedding  $\overline{K}  \hookrightarrow \mathbf{C}$ throughout.
	We begin by noting that since $X$ is strict, line bundles lift uniquely (cf.\ e.g.\ \cite[Lemma 26.15]{gortz2023algebraic}), which implies that $\mathcal{X}$ has trivial canonical bundle.
	
The case $i=1$ corresponding to divisor classes follows immediately from Lemma \ref{liftgivesdivisors}, as we have $\mathrm{H}^2(\mathcal{X}_K, \mathcal{O}_{\mathcal{X}_K}) = 0$ by upper  semicontinuity.
	We now verify the case $i=2$ of curve classes; our argument is a variant of the proof of  \cite[Lemma  6.3]{totaro2021integral}.
	Let us start with an element $u$  in $\mathrm{H}^{4}_{\acute et} (X_{\overline{k}}, \mathbf{Z}_{\ell} (2))$.	
	Pick an ample line bundle $\mathcal{L}$ on the projective variety $X$ and write $H= c_1(\mathcal{L}) $ for the corresponding class in $ \mathrm{H}_{\acute et}^2(\mathcal{X}_{\overline{k}}, \mathbf{Z}_{\ell}(1))$. By the hard Lefschetz theorem (cf.\ \cite[Théorème 4.1.1]{deligne1980conjecture}), we can pick  a class  $v \in \mathrm{H}_{\acute et}^2(\mathcal{X}_{\overline{k}}, \mathbf{Z}_{\ell}(1))$ satisfying 
	$$  N u  =v  \cup H \ \  \mbox{in} \ \   \mathrm{H}^4_{\acute et}(\mathcal{X}_{\overline{k}}, \mathbf{Z}_{\ell}(2))$$
	for some positive integer $N$.  
	
By smooth proper base change, we can  lift $u$ to  a class $\overline{u}$ in $\mathrm{H}^{4}_{\acute et} (\mathcal{X}_{\overline{K}}, \mathbf{Z}_{\ell} (2))$ and $v$ and $H$ to classes $\overline{v}$ and $\overline{H}$ in $ \mathrm{H}^2_{\acute et}(\mathcal{X}_{\overline{K}}, \mathbf{Z}_{\ell}(1))$, respectively.
 Lemma \ref{liftgivesdivisors} and its proof
show that  both $\overline{v}$ and $\overline{H}$ are  algebraic, and so  the class $$N\overline{u} = \overline{v} \cup \overline{H}$$  is also a  $\mathbf{Z}_\ell$-linear combination of classes
	of subvarieties of $\mathcal{X}_{\overline{K}}$. 
	The image of $N\overline{u}$ under the isomorphism $$\mathrm{H}^{4}_{\acute et} (\mathcal{X}_{\overline{K}}, \mathbf{Z}_{\ell} (2)) \xrightarrow{\ \cong\ } \mathrm{H}^{4}(\mathcal{X}_{\mathbf{C}}, \mathbf{Z}) \otimes  \mathbf{Z}_{\ell}$$ is a $\mathbf{Z}_{\ell}$-linear combination of integral Hodge classes, and since $$\mathrm{H}^{4}(\mathcal{X}_{\mathbf{C}}, \mathbf{Z})/ (\mathrm{H}^{4}(\mathcal{X}_{\mathbf{C}}, \mathbf{Z}) \cap \mathrm{H}^{2,2}(\mathcal{X}_{\mathbf{C}}, \mathbf{C}))$$ is torsion-free, the class $\overline{u}$ itself must be a $\mathbf{Z}_\ell$-linear combination of Hodge classes. 
	
	By Totaro's refinement \cite[Theorem 4.1]{totaro2021integral} of Voisin's  \cite[Theorem 2]{voisin2006integral}, this implies  	(as in the proof of \cite[Lemma 6.3]{totaro2021integral})
	that $\overline{u}$ is a 
	$\mathbf{Z}_{\ell}$-linear  combination of curve classes on $\mathcal{X}_{\overline{K}}$.
	The claim then follows by specialising  to $X_{\overline{k}}$.
	
	The cases $i=0$ and $i=3$ are immediate.
\end{proof}

 We can now proceed to the proof of the main theorem. \vspace{-5pt} 
	
	\begin{proof}[Proof of \Cref{threefoldtatemainl}]
	Let us first assume that  $X$ is Bloch--Kato $2$-ordinary  with 
	$\mathrm{H}^3_{\acute et}(X_{\overline{k}}, \mathbf{Z}_p)$ torsion-free. By 
	\cite[Corollary 1.7, Theorem C]{CYDef}, $X$ admits a canonical lift  $\mathcal{X}$ over $W(k)$.
As the Newton polygon of its special fibre lies above the Hodge polygon of its generic fibre (cf.\ e.g.\ \cite[8.3.6]{arthur1994f}),  $\mathcal{X}_K$ is  also a \mbox{Calabi--Yau variety.}
	 
	If $b_1(X)=0$, then the same argument as in Lemma \ref{threefoldtategenerallemma} implies the result.

So let us assume that  $b_1(X)>0$ and  start with the case of divisor classes. By the proof of  \cite[Theorem 12.8]{patakfalvi2026weak}, there is  a \mbox{generically finite dominant morphism} $$ Z \longrightarrow X_{\overline{k} } $$
with $Z$ either a product of an elliptic curve with a $K3$ surface or an abelian threefold. Here we have used that 
$X_{\overline{k}}$ is $F$-split by  \cite[Proposition 2.2.1]{achinger2018serre}.

By \cite[Lemma 6.2]{totaro2021integral} and 
\cite[Theorem 5.2]{tate1994conjectures} (see also \cite[Lemma 2.2.1]{ambrosi2019note}), it suffices to verify the rational Tate conjecture for divisors on abelian threefolds as well as products of elliptic curves with $K3$ surfaces.
But these statements follow from Tate's result on divisor classes on abelian varieties and  
the more recent works on K3 surfaces cited in the introduction.

		The case of curve classes is more subtle.  
		Let us pick an ample line bundle $\mathcal{L}$ on $X$.
	Given  a   class $u$ in $ \mathrm{H}^{4}_{\acute et} (X_{\overline{k}}, \mathbf{Z}_{\ell} (2))$ which is fixed by $\mathrm{Gal}(\overline{k}/l)$ for some finite field extension $l/k$, we  can use hard Lefschetz to write $$ N u =  v \cup H,$$ 
		where $H= c_1(\mathcal{L}) \in\mathrm{H}_{\acute et}^2(X_{\overline{k}}, \mathbf{Z}_{\ell}(1))$, $v \in \mathrm{H}_{\acute et}^2(X_{\overline{k}}, \mathbf{Z}_{\ell}(1))$, and $N$ is a positive integer.

	As cupping with $H$ is Galois-equivariant, 
	we may,
 after possibly replacing $l$ by a finite extension  as in the proof of \cite[Lemma 6.3]{totaro2021integral},
  assume  that 
	$v$ is fixed by $\mathrm{Gal}(\overline{k}/l)$ and hence a Tate class.
Hence the claim for divisors shows that $v$ is algebraic, and  we can   write 
$$v= \sum_{i=1}^n \lambda_i c_1(\mathcal{L}_i) $$		
		for suitably chosen line bundles $\mathcal{L}_i$ on $X_{\overline{k}}$ and scalars $\lambda_i\in \mathbf{Z}_{\ell}$.
We then use  \cite[Theorem  8.28]{CYDef}
to lift the line bundles   above 
to corresponding line bundles $\overline{\mathcal{L}}, \overline{\mathcal{L}}_1,\ldots, \overline{\mathcal{L}}_n$ on the generic fibre $\mathcal{X}_{\overline{K}}$.
 The algebraic classes  $$\overline{H} = c_1(\overline{\mathcal{L}})  \ \ \ \ \ \ \ \ \ \ \ \ \ \overline{v} =  \sum_{i=1}^n \lambda_i c_1(\overline{\mathcal{L}}_i)$$		
specialise to $H$ and $v$, respectively, and we deduce  that $u$ is algebraic by the same argument as in the proof of Lemma \ref{threefoldtategenerallemma}.

 If $X$ is strict and $1$-ordinary, \cite[Theorem A]{Ward} shows that $X_{\overline{k}}$ admits a smooth projective lift to   $W(\overline{k})$. Since $b_1(X)=0$, the claim follows from Lemma \ref{threefoldtategenerallemma}.

 The cases $i=0$ and $i=3$ are again immediate.
\end{proof}

 \subsection*{Note added}
After the results of this article were formulated and proved, OpenAI released a manuscript \cite{OAI26} on the rational Hodge conjecture for CM abelian varieties and its consequences for the Tate conjecture for abelian varieties over finite fields. To the best of our knowledge, these
 results do not overlap with the new results we establish in this article.

	\bibliographystyle{amsalpha}
	\bibliography{There}

\end{document}